\newcommand{\R}{\mathbb{R}}
\newcommand{\range}{\operatorname{range}}
\renewcommand{\u}{\mathsf{u}}
\newtheorem{lemma}{Lemma}
\newtheorem{theorem}{Theorem}
\newtheorem{proposition}{Proposition}
\title[Errors of spectral enhancement]{Error bounds for spectral enhancement
which are based on variable Hilbert scale inequalities}
\author{Markus Hegland}
\date{\today}
\begin{document}

\maketitle

\begin{abstract}
  Spectral enhancement -- which aims to undo spectral broadening -- leads to 
integral equations which are ill-posed and require special regularisation
techniques for their solution. Even when an optimal regularisation technique is
used, however, the errors in the solution -- which originate in data
approximation errors -- can be substantial and it is important to have good
bounds for these errors in order to select appropriate enhancement methods. A
discussion of the causes and nature of broadening provides regularity or source
conditions which are required to obtain bounds for the regularised solution of
the spectral enhancement problem. The source conditions do only in special cases
satisfy the requirements of the standard convergence theory for ill-posed
problems. Instead we have to use variable Hilbert scales and their interpolation
inequalities to get error bounds. The error bounds in this case turn out to be
of the form $O(\epsilon^{1-\eta(\epsilon)})$ where $\epsilon$ is the data error
and $\eta(\epsilon)$ is a function which tends to zero when $\epsilon$ tends to
zero. The approach is demonstrated with the Eddington correction formula and
applied to a new spectral reconstruction technique for Voigt spectra. In this
case $\eta(\epsilon)=O(1/\sqrt{|\log\epsilon|})$ is found.
\end{abstract}

\section{Introduction}

One of the computational challenges in spectroscopy is the separation of
overlapping spectral lines. This separation can be achieved by computationally
narrowing the spectral lines and thus enhancing the resolution or correcting the
spectrum. The class of methods of resolution enhancement considered here is
based on the solution of linear Fredholm integral equations of the first kind
using observed data for the right hand side. The basic approach was first
analysed in~\cite{AllGG64} but it goes back in principle to work by
Stokes~\cite{Sto48}. The effect of data errors has to be analysed carefully,
especially since the enhancement problem is ill-posed. This analysis is
performed in the following using variable Hilbert scales~\cite{Heg92,Heg95}. A
more traditional error analysis which can be found in~\cite{Gro84} is not
directly applicable here as the source conditions are non-standard. However, in
contrast to many other ill-posed problems, here the underlying physical model
does suggest specific source conditions. If $f$ is the enhanced spectrum and
$f_\alpha$ an (optimal order) regularised approximation of $f$ then  bounds
of the form 
$$ \|f-f_\alpha\| \leq \epsilon^{1-\eta(\epsilon)} $$ 
are found where $\epsilon$ is the residual of $f_\alpha$. In the classical case
the $\eta(\epsilon)$ is constant, in contrast it is shown here that this
exponent slowly decreases to zero with $\epsilon\rightarrow 0$. 

A new enhancement method based on Lorentz kernels for Voigt spectra is shown to
provide good performance compared to more traditional methods like the Eddington
correction as it capitalises more on the smoothness of the data and does not
require any advanced knowledge of the proportions of the Gaussian and Lorentzian
components in the Voigt spectrum. If a spectrum contains a Gaussian component
the error bound is of order $O(\epsilon^{1-c/\sqrt{|\log\epsilon|}})$ and the
convergence rate thus grows with $\epsilon\rightarrow 0$. For very small
$\epsilon$ one can find very close to $O(\epsilon)$ convergence, however, this
depends on the level of enhancement required. Experiments show that this method
leads to a reduction of linewidth of more than a factor of two in the case of a
5\% data error.

In the remaining parts of this section a brief review of broadening 
mechanisms are given, in addition to a short discussion of a least squares method to
determine the location and strength of spectral lines. In section~2 we present the
integral equation framework for resolution enhancement and illustrate this
with the Eddington correction formula and Stokes correction by partial
Gaussian deconvolution. In section~3 the method using Lorentz deconvolution
for Gaussian and Voigt spectra is discussed in terms of the errors. Section~4
then provides some demonstrations of the enhancement properties of this
Lorentz deconvolution which in particular illustrates the broadening
effects of noise and regularisation. In the concluding section~5 related and 
open problems are considered. 

\subsection{Models of spectra and broadening}

In the natural sciences, a spectrum is a distribution of photon counts over
energy or frequency. Since Fraunhofer's work in 1814 it is well known that this
distribution is concentrated along lines, both for emission and absorption
spectra. The existence of these \emph{spectral lines} was later confirmed by
quantum mechanics. Their importance is due to the fact that they provide
information about the energy levels of the electrons and thus insights into the
structure and composition of the originating substrate. Spectroscopy has been
for a long time one of the most important tools in experimental science. A
simple model for a spectrum based on the Fraunhofer spectral lines would consist
of a probability measure with discrete support.

Almost simultaneously with Fraunhofer's discovery it was realised that spectral
lines have a non-zero width. This broadening originates from many different
physical effects and a discussion of spectral broadening can be found in a
variety of different books and journals, see for
example~\cite{Bre81,BowM89,SobVY95,Lin02,Bra03,Sim03,Bal06,Dra06,Kha06,Irw07}.
In order to get a basic idea we review some of the most important mechanisms
here. 

A first type of broadening, termed \emph{natural broadening}, occurs because the
time of the transition between the two energy levels is finite. The spectral
lines which have only been broadened by this type have a Lorentzian shape, i.e.,
have peaks of the form $1/(1+x^2/s^2)$ where $s$ is a width parameter. Usually
natural broadening leads to very narrow lines. Much larger than natural
broadening is usually \emph{Doppler broadening} which occurs because the
emitting (or absorbing) particles are in constant thermal motion which leads to
a Doppler effect which shifts the energies of the photons. The shape of spectral
lines which only have been Doppler broadened are Gaussian. While the width of
the Doppler broadened lines is proportional to the energy we will neglect this
here and assume a constant width approximation. Neighbouring particles to the
electrons emitting or absorbing the photons produce a third kind of broadening,
the \emph{pressure broadening}. One can show that in the case where only
pressure broadening occurs the spectral lines are Lorentzian. Further broadening
originates in the instrumentation and even discretisation (or binning) of the
spectrum produces a certain amount of broadening~\cite{DulK08}. Finally the
medium which the photons need to traverse before getting to the observer also
produces some broadening. There are other effects which contribute to broadening
and there are other distortions of spectra than broadening occurring. This
includes spectral shifts and the occurrence of extra peaks, so-called
satellites~\cite{Dra06}. 

A fairly general but simple broadening model would represent observed spectra as
the effect of an integral operator on an underlying spectrum which might have
been modified in other ways. Here this underlying spectrum $u$ is assumed to be
in $L_2(\R)$ and so an observed spectrum $g$ is of the form
$$
  g(x) = \int_\R a(x,y)\, u(y)\, dy
$$
with some kernel $a$ which in the simplest case is assumed to be a convolution
kernel, i.e. $a(x,y) = \alpha(x-y)$ for some $L_2$ function $\alpha$. More
generally, an observed spectrum is modeled as the image of a product of several
broadening operators $A_1,\ldots,A_n$, i.e., as $g=A_1\cdots A_n u$. In some
cases, such a product can lead to a normal distribution because of the central
limit theorem. Here we assume mostly that all the operators are convolutions and
have Lorentzian or Gaussian shape (but different widths). As the operators
commute and the convolution of Lorentzians is a Lorentzian and of Gaussians is a
Gaussian, respectively, it is found that a good model is given by the Voigt
shape which consists of a convolution of a Gaussian with a Lorentzian. In the
following we call the integral equation $Au=g$ representing any kind of (linear)
broadening the \emph{broadening equation}. 

\subsection{Fitting the lines}

While immediately appealing, the inversion of the broadening equation $Au=g$ is
not feasible as it is typically severely ill-posed, the $g$ has a substantial
amount of observational error and $u$ is typically not very smooth so that even
a regularised solution cannot be expected to be a good approximation. Any
feasible approximation will make use of the (approximate) Fraunhofer line
structure of the $u$. The simplest model assumes that $u$ is a measure with
discrete support and intensities $u_i$ so that the broadening equation takes the
form
$$
  g(x) = \sum_{i=1}^\infty a(x,x_i)\, u_i.
$$
The determination of the $x_i$ and $u_i$ from some data $g_\delta$ with
$\|g-g_\delta\| \leq \delta$ can be done by minimising the least-squares
objective function
$$
  J(u) = \left\|\sum_{i=1}^\infty a(\cdot,x_i)\, u_i - g_\delta \right\|.
$$
When the locations $x_i$ of the spectral lines are known this amounts to a
linear least squares problem. The determination of these locations, however, is
a nonlinear problem. An interesting discussion of this problem from the
perspective of Bayesian statistics can be found in~\cite{Bre88}. 

In~\cite{GolP73} Golub and Pereya discuss the \emph{variable projection method}
for the solution of the nonlinear problem above in the case of a finite number
of non-zero $u_i$. Rather than minimising the squared residual they first solve
for the linear parameters $u_i$ explicitly such that $\u =A^+(x)g_\delta$ where
$\u=(u_1,\ldots,u_n)$. They then use a nonlinear (typically Gauss-Newton) method
to solve for the locations $x=(x_1,\ldots,x_n)$ by minimising the functional
$\|A(x)A^+(x)g_\delta - g_\delta\|$. In a recent paper~\cite{MulS08} the authors
discuss the application of this method to spectroscopic problems and consider
reasons for the success of the approach. They observe in particular superior
numerical conditioning and convergence of the Gauss-Newton method compared to
the original optimisation problem.

An important condition required by the variable projection method is that the
matrix $A(x)$ has to have a fixed rank for $x$ in some neighbourhood of the
minimum of the variable projection functional. This condition may be difficult
to fulfil when one has two components of $x$ which are very close. As two
coinciding $x_i$ will reduce the rank of $A(x)$, the neighbourhood where the
rank condition holds can be very small. It would certainly be difficult to find
initial conditions for the Gauss-Newton iteration which are in a neighbourhood
of the exact solution.

When the spectral lines are well separated then the variable projection method
works very well. This is for example the case where the \emph{baseline
condition} in which case the functions $a(\cdot,x_i)$ have non-overlapping
supports (at least numerically). It follows that the $a(\cdot,x_i)$ are
pair-wise orthogonal, good starting values can be obtained and the rank
condition can be maintained. A similarly favourable situation occurs if the
\emph{Rayleigh condition holds}. This motivates the development of methods which
are able to enhance the spectrum so that the enhanced spectral lines are better
separated. A discussion of these aspects from a statistical perspective can be
found in~\cite{AcuH97}.

\section{Resolution enhancement}

\subsection{The enhancement equation\label{sec1.1}}
The resolution enhancement procedures considered here consist of algorithms
which determine the \emph{enhanced spectrum} $f$ as a solution of an integral
equation $Bf = g$ from the \emph{observed spectrum} $g_\delta$ which satisfies
$\|g_\delta-g\|\leq \delta$. The integral operator $B$ is of the form 
\begin{equation}
  Bf\,(x) = \int_\R b(x,y) f(y)\, dy.
\end{equation}
The integral equation
\begin{equation}
  \label{Bfeqg}
  Bf = g
\end{equation}
will be called the \emph{enhancement equation}. The operator $B$ is chosen such
that the enhanced spectrum $f$ has narrower lines than the original spectrum
$g$. The main constraint in choosing $B$ is that the enhancement equation should
be solvable which means that $g$ has to be in the range of $B$:
\begin{equation}
  \label{ginRB}
  g \in \range(B).
\end{equation}
In the case where $B$ is a convolution operator, the resolution enhancement is
the \emph{Stokes correction formula}~\cite{Sto48}. The integral equation Ansatz
for enhancement was introduced Allen, Gladney and Glarum in their
ground-breaking paper~\cite{AllGG64}. A simple precursor to this type of
enhancement is the \emph{Eddington correction formula}~\cite{Edd13,Edd40,Bra03}
for the enhancement of spectra with Gaussian peaks using differentiation.

The careful choice of the operator $B$ is essential to successful enhancement.
Even if the range condition~(\ref{ginRB}) holds, the solution of the enhancement
equation~(\ref{Bfeqg}) may show poor resolution and contain a large error. This
is due to the ill-posedness of the enhancement equation. It's solution will
require some form of regularisation. When selecting $B$ one has to trade-off the
amount of enhancement achievable by $B$ against the regularisation required for
the solution of the enhancement equation. While the theory of resolution
enhancement is based on the general theory for the solution of integral
equations, there is one important difference: When solving integral equations,
the operator is given while for resolution enhancement, the operator $B$ is
chosen. In both cases, on needs to choose the regularisation method.

There is a large literature on regularisors, a concise and short reference is
still the book by Groetsch~\cite{Gro84}. In this book, convergence rates of
regularisors are given, provided that a \emph{source condition} of the form $g
\in \range((BB^*)^s)$ holds for some integer $s>1$ and where $B^*$ denotes as
usual the adjoint of the operator $B$. Here we will use a more general theory
based on \emph{variable Hilbert scale inequalities}~\cite{Heg92,Heg95}. This
framework has since been used in~\cite{MatP03,MatT06,MatH08,MatS08}. In the
analysis literature, the variable Hilbert scale interpolation is called
interpolation with a function parameter\footnote{thanks to M.~Hansen and
S.~Kuehn for pointing this out to me}, see, for
example~\cite{Mer83,CobL86,MikM08}. In the analysis of partial differential
equations, a related generalised H\"older inequality has been applied
in~\cite{BegS07}. Source conditions are very important in the analysis of
convergence of regularisation and some newer work which includes the application
to nonlinear problems can be found in~\cite{DuvHY07,Tau08,MatH08,HeiH09}. The
recovery of $f=B^{-1} g$ from $g_\delta$ is the main topic of the
book~\cite{Gro07} by Groetsch. The specific case of singular convolutions are
covered in a paper by Sushkov~\cite{Sus96}.

In the following let $H_B\subset{L_2(\R)}$ denote the Hilbert space with the
norm $\|g\|_B = \|B^{-1} g\|$ and let $H_\psi$ be the Hilbert space with the
norm $\|g\|_\psi = (g,\psi((BB^*)^{-1})g)$ where $\psi$ is a function on (a
subset of) $(0,\infty)$ which is continuous and monotonically increasing. The
operator $\psi((BB^*)^{-1}))$ is defined using the spectral theorem as
in~\cite{Heg95}. In the following $f_\alpha$ will always denote a regularised
solution of $Bf=g$. One then has the following general convergence theorem. 
\begin{theorem}
  \label{thm1}
  Let $B: L_2(\R) \rightarrow L_2(\R)$ be an injective, continuous linear
operator for which $BB^*-\lambda I$ is injective for $0\leq\lambda\leq c_0$ for
some $c_0>0$. Furthermore, let $\psi$ be a non-negative function which is
monotonically increasing for arguments larger than $1/c_0$. Finally, let $\Psi$
be a non-negative function such that $\Psi(\psi(\lambda))\geq \lambda$ and $\Psi$
is monotonically increasing and concave for all arguments $\lambda > 1/c_0$.
  
  If $f_\alpha\in H_\psi$ satisfies
  \begin{align}
       \|Bf_\alpha\|_\psi  & \leq C, \quad \text{and} \label{stable}\\
       \|Bf_\alpha-g\|  & =   \epsilon \label{consistent}
  \end{align}
  then
  \begin{equation}
    \label{VHSineq}
    \|f-f_\alpha\| \leq \epsilon \,
    \sqrt{\Psi\left((C+\|g\|_\psi)^2/\epsilon^2\right)}
  \end{equation}
  for all $f$ and $g=Bf\in H_\psi$.
\end{theorem}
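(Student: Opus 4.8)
The plan is to reduce the assertion to a single variable Hilbert scale interpolation inequality applied to the element $h = Bf_\alpha - g$, and then to prove that inequality using the spectral theorem together with Jensen's inequality. First I would set $e = f - f_\alpha$ and note that $Be = Bf - Bf_\alpha = g - Bf_\alpha$, so that the consistency condition~(\ref{consistent}) gives $\|Be\| = \epsilon$, while the triangle inequality and the stability condition~(\ref{stable}) give $\|Be\|_\psi \leq \|g\|_\psi + \|Bf_\alpha\|_\psi \leq C + \|g\|_\psi$. Since $(B^{-1})^*B^{-1} = (BB^*)^{-1}$, the quantity to be bounded satisfies $\|e\|^2 = \|B^{-1}(Be)\|^2 = (Be, (BB^*)^{-1}Be)$. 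The problem is thereby recast as: bound $(h,(BB^*)^{-1}h)$ from above in terms of $\|h\| = \epsilon$ and $\|h\|_\psi \leq C + \|g\|_\psi$, where $h = Be \in H_\psi$.

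Next I would invoke the spectral theorem for the self-adjoint operator $S = (BB^*)^{-1}$, writing $S = \int \lambda\, dE_\lambda$ and introducing the positive measure $d\mu(\lambda) = d(E_\lambda h, h)$, which has total mass $\|h\|^2 = \epsilon^2$. The three relevant quantities then read $(h, Sh) = \int \lambda\, d\mu$, $\ \|h\|_\psi^2 = \int \psi(\lambda)\, d\mu \leq (C + \|g\|_\psi)^2$, and $\int d\mu = \epsilon^2$. The injectivity hypothesis on $BB^* - \lambda I$ for $0 \leq \lambda \leq c_0$ serves precisely to guarantee that $\mu$ is supported on the set where the argument of $\psi$ exceeds $1/c_0$, so that the assumed monotonicity of $\psi$ and the monotonicity and concavity of $\Psi$ are available on the whole support of $\mu$.

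The inequality then follows in two moves. From $\Psi(\psi(\lambda)) \geq \lambda$ I would estimate $\int \lambda\, d\mu \leq \int \Psi(\psi(\lambda))\, d\mu$, and then apply Jensen's inequality to the concave function $\Psi$ against the probability measure $\mu/\epsilon^2$, obtaining $\int \Psi(\psi(\lambda))\, d\mu \leq \epsilon^2\, \Psi\bigl(\epsilon^{-2}\int \psi(\lambda)\, d\mu\bigr)$. Using that $\Psi$ is increasing together with the smoothness bound $\int \psi(\lambda)\, d\mu \leq (C + \|g\|_\psi)^2$ yields $\|e\|^2 \leq \epsilon^2\, \Psi\bigl((C+\|g\|_\psi)^2/\epsilon^2\bigr)$, and taking square roots gives~(\ref{VHSineq}).

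I expect the main obstacle to be not the computation but the careful justification of the spectral step: confirming that $(BB^*)^{-1}$ is a well-defined (in general unbounded) self-adjoint operator, that $h$ lies in the domain making all three integrals finite, and that the support condition on $\mu$ genuinely follows from the injectivity assumption, so that concavity and monotonicity hold exactly where Jensen's inequality is applied. The use of Jensen's inequality also tacitly requires $\mu$ to be finite — which it is, with mass $\epsilon^2$ — and $\Psi$ to be concave on the convex hull of the support of $\lambda \mapsto \psi(\lambda)$ under $\mu$, a point worth verifying against the stated hypotheses on $\Psi$.
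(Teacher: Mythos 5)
Your proof is correct, and its outer reduction coincides exactly with the paper's: both work with the residual $r = Bf_\alpha - g$ (your $h$), observe that $g = Bf$ gives $\|r\|_B = \|f - f_\alpha\|$, bound $\|r\|_\psi \leq C + \|g\|_\psi$ by the triangle inequality, and conclude via the monotonicity of $\Psi$. Where you genuinely differ is the central step: the paper does not prove the interpolation inequality $\|r\|_B \leq \|r\|\,\sqrt{\Psi(\|r\|_\psi^2/\|r\|^2)}$ at all, but cites it as Theorem~1 of \cite{HegA09}, itself a consequence of the variable Hilbert scale inequalities of \cite{Heg92,Heg95}; you instead reprove it from first principles, writing $\|f-f_\alpha\|^2 = (r,(BB^*)^{-1}r)$, passing to the spectral measure $d\mu(\lambda)=d(E_\lambda r,r)$ of total mass $\epsilon^2$, estimating $\lambda \leq \Psi(\psi(\lambda))$ on the support, and applying Jensen's inequality for the concave $\Psi$ against the probability measure $\mu/\epsilon^2$. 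This is in substance the standard proof of the cited inequality, so your route makes the theorem self-contained at the cost of the spectral-theoretic bookkeeping that the citation outsources. One caveat you flag deserves emphasis, since it concerns the theorem's hypotheses rather than your argument: injectivity of $BB^*-\lambda I$ for $0\leq\lambda\leq c_0$ excludes eigenvalues in $[0,c_0]$ but not continuous spectrum there, so by itself it does not force $\mu$ to live in $(1/c_0,\infty)$ where $\psi$ is monotone and $\Psi$ is concave; one additionally needs $\spec(BB^*)\subset[0,c_0]$, i.e.\ $\|BB^*\|\leq c_0$, which holds in all of the paper's applications (convolution operators whose symbol is bounded by $c_0$, injectivity then removing any atom at the endpoint $1/c_0$). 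With that reading of the hypotheses, which the paper's own applications presuppose, your Jensen argument goes through and yields exactly the bound (\ref{VHSineq}).
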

\begin{proof}
  The functions $\Psi$, $\psi$ together with the functions $\theta(\lambda)=1$ 
  and $\phi(\lambda)=\lambda$ satisfy the conditions of Theorem~1 
  in~\cite{HegA09} which is a direct consequence of the interpolation
  inequalities in~\cite{Heg92,Heg95} which, with $\|r\|_\phi = \|r\|_B$ and
  $\|r\|_\theta = \|r\|$ here takes the form
$$
  \|r\|_B \leq \|r\|\, \sqrt{\Psi(\|r\|_\psi^2/\|r\|^2)}, \quad
  \text{for all $r\in H_\psi$.}
$$

  Now let $r=Bf_\alpha -g $. As $g=Bf$ on has 
$$
   \|r\|_B=\|Bf_\alpha-Bf\|_B = \|f_\alpha - f\|.
$$
  Furthermore, by the triangle inequality one gets
$$
  \|r\|_\psi = \|Bf_\alpha - g\|_\psi \leq \|Bf_\alpha\|_\psi + \|g\|_\psi
$$
  and, as $\Psi$ is monotonically increasing, it follows that
$$
  \Psi(\|r\|_\psi^2/\|r\|^2) \leq \Psi((C+\|g\|_\psi)^2/\epsilon^2).
$$
  Inserting this in the interpolation inequality gives the claimed bound.
\end{proof}
This result can be interpreted as a variant of the Lax equivalence theorem. The
conditions on $f_\alpha$ are the stability condition $\|Bf_\alpha\|_\psi\leq C$
and the consistency condition $\|Bf_\alpha -g \|=\epsilon$. If $\psi$ is unknown
one may take a stronger norm for stabilisation in a discrepancy method similar
to the one discussed in~\cite{Heg92}. For consistency one wants to make sure
that $\epsilon$ is small. This is achieved indirectly by controlling the size of
$\|Bf_\alpha-g_\delta\|$ and observing that 
$$
  \|Bf_\alpha - g\| \leq \|Bf_\alpha - g_\delta\| + \|g-g_\delta\|
$$
by the triangle inequality. In the following we call any (approximate)
enhancement $f_\alpha$ which satisfies both conditions~(\ref{stable})
and~(\ref{consistent}) a \emph{spectrum which has been stably enhanced with
$B$}.

\subsection{The Eddington correction formula\label{sec1.2}}

This early and still popular approach to the enhancement of Gaussian spectra
uses derivatives and is of the form
$$
  f = g - \frac{g^{(2)}}{2} + \frac{g^{(4)}}{8} - \cdots,
$$
see~\cite{Edd13,Edd40,Bra03}. It has been observed in~\cite{AllGG64} that
correction formulas of this type may be viewed as solutions of integral
equations of the form discussed in section~\ref{sec1.1}. We can thus apply
theorem~\ref{thm1} to obtain an error bound for the Eddington correction. See
also~\cite{Mei77} for a discussion of their application in practice. Other
procedures to spectral enhancement based on differentiation are discussed from
the point of view of numerical differentiation in~\cite{AndH09}.

The $k$-th order Eddington correction $f$ is defined as
\begin{equation}
  \label{eqn:Eddi}
  f = \sum_{j=0}^k \frac{(-1)^j}{2^j j!} g^{(2j)}
\end{equation}
where $g^{(2j)}$ denotes the derivative of order $2j$ of $g$. The Eddington
correction formula now fits into the integral equation framework for resolution
enhancement with enhancement equation $Bf=g$ and the enhancement operator $B$
has a kernel
$$
  b(x,y) = \frac{1}{\pi} 
    \int_0^\infty \left(\sum_{j=0}^k \frac{\omega^{{2j}}}{2^j j!}
                 \right)^{-1} \cos(\omega(x-y))d\omega.
$$

In particular, for $k=1$ one has
$$
  b(x,y) = \frac{1}{\sqrt{2}} e^{-\sqrt{2}|x-y|}
$$
and for $k=2$ the kernel is of the form
$$
   b(x,y) = \gamma e^{-\alpha|x-y|} \cos(\beta(|x-y|+\theta)
$$
for some $\alpha, \beta, \gamma$ and $\theta$.

In the following, let
$$
  a_G(x,y) = \frac{1}{\sqrt{2\pi}} e^{-(x-y)^2/2}
$$
and let a spectrum $g$ which has been broadened by $a_G$ be called a
\emph{Gaussian spectrum}. In this case one has
\begin{equation}
  \label{Gauss-spec}
  g(x) = \int_\R a_G(x,y) u(y)\, dy
\end{equation}
for some $u\in L_2(\R)$. The Eddington correction formula have been designed
to reduce some of the broadening produced by $a_G$.

A motivation for this particular formula comes from the convolution theorem as
$$
  \hat{g}(\omega) = \hat{a}_G(\omega) \hat{u}(\omega)
$$
where $\hat{a}_G(\omega) = \exp(-\omega^2/2)$ and $\hat{g}$ and $\hat{u}$ are
the Fourier transforms of $g$ and $u$ respectively. By the Taylor theorem one 
then gets formally
$$
  \hat{u}(\omega) = \sum_{j=0}^\infty \frac{1}{2^j j!} \omega^{2j} 
  \hat{g}(\omega).
$$
Truncating this expansion and using the fact that multiplication with $\omega^2$
in the Fourier domain corresponds to taking $-d^2/dx^2$ in the original domain
gives the formula.

The following lemma provides the expressions and some properties for the $\psi$
and $\Psi$ which will be used to  establish the error bound of the correction
formula. 
\begin{lemma}
  \label{lem1}
  Let $B$ be the enhancement operator\footnote{Some times the inverse $B^{-1}$
  is called enhancement operator} for the $k$-th order Eddington
  correction formula. Furthermore, let $t_k(\eta)$ be the $k$-th order Taylor
  polynomial for the exponential function for $k\geq 0$ and $t_k=0$ for $k<0$.
  Then
  \begin{enumerate}
    \item Any Gaussian spectrum $g$ is in $H_\psi$, the Hilbert space with
     the scalar product $(g,g)_\psi = (g,\psi((BB^*)^{-1}) g)$ and where
     $$
       \psi(\lambda) = \exp(2 t_k^{-1}(\sqrt{\lambda})), \quad \lambda \geq 1.
     $$
    \item The inverse 
     $$
       \Psi(\eta) = \psi^{-1}(\eta) = t_k(\log(\eta)/2)^2, \quad \eta \geq 1
     $$
     is concave.
  \end{enumerate}
\end{lemma}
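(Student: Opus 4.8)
The plan is to pass to the Fourier domain, where the convolution operator $B$ becomes multiplication by a symbol and the spectral calculus in the definition of $H_\psi$ becomes transparent. First I would record the symbol of $B$: since the Eddington correction $f=B^{-1}g$ acts in the Fourier domain by $\hat f(\omega)=t_k(\omega^2/2)\,\hat g(\omega)$ (differentiation of order $2j$ becomes multiplication by $(-1)^j\omega^{2j}$, and the alternating signs cancel), the operator $B^{-1}$ has symbol $t_k(\omega^2/2)$ and hence $B$ has symbol $1/t_k(\omega^2/2)$. Because the kernel $b$ is real and even, $B$ is self-adjoint, so $BB^*$ has symbol $t_k(\omega^2/2)^{-2}$ and $(BB^*)^{-1}$ the real symbol $t_k(\omega^2/2)^2$. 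By the spectral theorem (realised here through Plancherel), $\psi((BB^*)^{-1})$ then acts as multiplication by $\psi(t_k(\omega^2/2)^2)$.

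For part (1) the point is that $\psi$ has been chosen exactly so that this symbol collapses. Substituting $\lambda=t_k(\omega^2/2)^2$ into $\psi(\lambda)=\exp(2t_k^{-1}(\sqrt\lambda))$ and using $\sqrt\lambda=t_k(\omega^2/2)\ge 1$ gives $\psi(t_k(\omega^2/2)^2)=\exp(2\cdot\omega^2/2)=\exp(\omega^2)$; this is also why the stated domain is $\lambda\ge1$, as $t_k$ maps $[0,\infty)$ onto $[1,\infty)$. Then I would write $\|g\|_\psi^2=\int_\R \exp(\omega^2)\,|\hat g(\omega)|^2\,d\omega$, and for a Gaussian spectrum insert $\hat g(\omega)=\hat a_G(\omega)\hat u(\omega)=\exp(-\omega^2/2)\hat u(\omega)$, so that $|\hat g(\omega)|^2=\exp(-\omega^2)|\hat u(\omega)|^2$. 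The two exponentials cancel and $\|g\|_\psi^2=\|\hat u\|^2=\|u\|^2<\infty$, whence $g\in H_\psi$.

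For part (2) I would first confirm $\Psi=\psi^{-1}$ by solving $\eta=\exp(2t_k^{-1}(\sqrt\lambda))$ for $\lambda$, which gives $\lambda=t_k(\log\eta/2)^2$ at once. The substance is the concavity. Writing $p=t_k$ and using the change of variable $s=\log\eta/2$ (so $\eta\ge1$ corresponds to $s\ge0$), a direct computation of $\Psi''(\eta)$, in which the factors $h'(\eta)=1/(2\eta)$ and $h''(\eta)=-1/(2\eta^2)$ appear, reduces the claim $\Psi''\le 0$ to the one-variable inequality $q''(s)-2q'(s)\le0$ for $q(s)=p(s)^2$. Expanding with $q'=2pp'$ and $q''=2(p')^2+2pp''$ leaves $p'(s)^2+p(s)p''(s)-2p(s)p'(s)\le0$.

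The main obstacle — and the one genuinely computational step — is this last inequality. I would handle it using the defining relations $t_k'=t_{k-1}$ and $t_k=t_{k-1}+s^k/k!$, i.e.\ writing $p''=a$, $p'=a+b$, $p=a+b+c$ with $a=t_{k-2}(s)$, $b=s^{k-1}/(k-1)!$ and $c=s^k/k!$. Substituting and cancelling, the left side should collapse to $-(ab+b^2+ac+2bc)$, and since each of $a,b,c$ is nonnegative for $s\ge0$ (Taylor polynomials of $\exp$ and monomials are nonnegative there) this is $\le0$, giving concavity. The degenerate cases $k=0,1$, where $a=t_{k-2}=0$, are covered by the same bookkeeping once one adopts the convention $t_k=0$ for $k<0$; I expect the only care needed is tracking the $1/\eta$ factors in $\Psi''$.
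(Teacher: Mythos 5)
Your proposal is correct and follows essentially the same route as the paper: part (1) is the identical Parseval/Fourier-symbol computation in which $\psi(t_k(\omega^2/2)^2)=\exp(\omega^2)$ cancels $|\hat a_G(\omega)|^2$ to give $\|g\|_\psi=\|u\|^2$, and part (2) is the same second-derivative verification, since your collapsed expression $-(ab+b^2+ac+2bc)$ is algebraically identical to the paper's bracket $-\bigl(\tfrac{s^k}{k!}t_{k-1}(s)+\tfrac{s^{k-1}}{(k-1)!}t_k(s)\bigr)$ with $s=\log\eta/2$, both nonpositive because all Taylor-polynomial terms of $\exp$ are nonnegative for $s\geq 0$. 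If anything, your substitution $s=\log\eta/2$ reducing concavity to $q''-2q'\leq 0$ is a slightly cleaner bookkeeping than the paper's direct differentiation (whose intermediate formula for $d\Psi/d\zeta$ contains a chain-rule slip that your version avoids), but the key identity and the conclusion coincide.
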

\begin{proof}
\begin{enumerate}
  \item The $B$-norm is by Parseval's theorem
$$
  \|g\|_B^2 = \|B^{-1} g\|^2 = \int_\R t_k(\omega^2/2)^2 |\hat{g}(\omega)|^2\,
             d\omega.
$$
As $\psi(t_k(\omega^2/2)^2) = \exp(\omega^2)$ by definition one gets
$$
  \|g\|_\psi = (g, \psi((BB^*)^{-1}) g) = \int_\R \exp(\omega^2) 
   |\hat{g}(\omega)|^2\, d\omega.
$$
which is equal to $\|u\|^2$ if $g$ is a Gaussian spectrum with 
$$
  g(x) = \int_\R a_G(x,y) u(y)\, dy.
$$
It follows that $\|g\|_\psi$ is a norm on the set of Gaussian spectra which
provides a Hilbert space structure for this space.
\item As $dt_k(\zeta)/d\zeta = t_{k-1}(\zeta)$ one has 
      $d\Psi(\zeta)/d\zeta = t_{k-1}(\zeta)$ and consequently
$$
  \frac{d^2\Psi}{d\zeta^2} = -\frac{1}{2\zeta^2}\left(\frac{1}{k!}
  \left(\frac{\log(\zeta)}{2}\right)^k t_{k-1} + \frac{1}{(k-1)!}
  \left(\frac{\log(\zeta)}{2}\right)^{k-1} t_k\right)
$$
which is non-positive and so $\Psi(\zeta)$ is concave for $\zeta\geq 1$.
\end{enumerate}
\end{proof}

We now get the main theorem which provides bounds on how well one can evaluate
the Eddingtion correction.

\begin{proposition}
  Let $f_\alpha$ be a stably enhanced spectrum using $B$ the $k$-th order
  Eddington enhancement for Gaussian spectra and 
  $\psi(\lambda) = \exp(2 t_k^{-1}(\sqrt{\lambda}))$. Then there exists a $C>0$
  independent of $\epsilon$ such that
\begin{equation}
  \|f - f_\alpha\| \leq C \epsilon |\log(\epsilon)|^k.
\end{equation}
\end{proposition}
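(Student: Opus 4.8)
The plan is to combine the general convergence bound of Theorem~\ref{thm1} with the explicit interpolation function $\Psi$ computed in Lemma~\ref{lem1}, and then to read off the asymptotic rate from the fact that $\sqrt{\Psi}$ is, up to a constant shift of its argument, the degree-$k$ polynomial $t_k$ evaluated at a logarithm.

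First I would verify that the hypotheses of Theorem~\ref{thm1} are met for the Eddington operator. Since $B$ is a convolution operator whose Fourier multiplier $1/t_k(\omega^2/2)$ is positive, bounded by $1$, and decays at infinity, $B$ is injective, continuous and self-adjoint, so $BB^*$ has multiplier $1/t_k(\omega^2/2)^2$. Because this multiplier is real-analytic and non-constant, it equals any fixed value $\lambda$ only on a null set, so $B$ has no point spectrum and $BB^*-\lambda I$ is injective for every $\lambda$; the spectral condition holds with any $c_0$. The monotonicity of $\psi$ and the monotonicity and concavity of $\Psi$ on $[1,\infty)$ are exactly the content of Lemma~\ref{lem1}, and part~(1) of that lemma guarantees that a Gaussian spectrum $g$ lies in $H_\psi$ with $\|g\|_\psi=\|u\|<\infty$. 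As $f_\alpha$ is stably enhanced, the stability bound and consistency identity~(\ref{stable})--(\ref{consistent}) are available, so Theorem~\ref{thm1} yields
$$\|f-f_\alpha\|\le \epsilon\,\sqrt{\Psi\!\big((C+\|g\|_\psi)^2/\epsilon^2\big)}.$$

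Next I would insert the explicit expression $\Psi(\eta)=t_k(\log(\eta)/2)^2$ from Lemma~\ref{lem1}. Writing $D=C+\|g\|_\psi$, which is a fixed positive constant for the given spectrum and does not depend on $\epsilon$, one has $\log\!\big(D^2/\epsilon^2\big)/2=\log D+|\log\epsilon|$ for $0<\epsilon<1$. Since this argument is positive and $t_k$ has nonnegative coefficients, $t_k(\log D+|\log\epsilon|)>0$, so the square root is simply $\sqrt{\Psi(D^2/\epsilon^2)}=t_k(\log D+|\log\epsilon|)$, valid whenever $D^2/\epsilon^2\ge 1$, i.e. for all sufficiently small $\epsilon$ (where Lemma~\ref{lem1} applies).

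The remaining, and essentially only nontrivial, step is an elementary asymptotic estimate. Because $t_k(\zeta)=\sum_{j=0}^k \zeta^j/j!$ is a polynomial of degree $k$ with leading coefficient $1/k!$, the quantity $t_k(\log D+|\log\epsilon|)$ is a polynomial of degree $k$ in $|\log\epsilon|$, so $t_k(\log D+|\log\epsilon|)\le C'|\log\epsilon|^k$ for some constant $C'$ and all small enough $\epsilon$; combining this with the displayed bound gives $\|f-f_\alpha\|\le C'\epsilon|\log\epsilon|^k$, as claimed. The only points needing care are the bookkeeping that keeps the constant independent of $\epsilon$ — which works because both the stability constant and $\|g\|_\psi$ are fixed once the spectrum is fixed — and the restriction to small $\epsilon$ required to keep the argument of $\Psi$ in the region $[1,\infty)$; for $\epsilon$ bounded away from zero the bound holds after enlarging the constant. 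I expect no genuine obstacle, since the heavy lifting (the interpolation inequality and the concavity of $\Psi$) has already been carried out in Theorem~\ref{thm1} and Lemma~\ref{lem1}.
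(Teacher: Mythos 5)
Your proposal is correct and follows essentially the same route as the paper: invoke Theorem~\ref{thm1} with the $\psi$ and $\Psi$ of Lemma~\ref{lem1}, evaluate $\sqrt{\Psi\left((C+\|g\|_\psi)^2/\epsilon^2\right)}$ as $t_k$ of a logarithm, and bound that by a constant multiple of $|\log\epsilon|^k$ for small $\epsilon$ (the paper uses the explicit inequality $t_k(\lambda)\leq e\lambda^k$ for $\lambda\geq 1$ where you use degree-$k$ polynomial growth, which is the same elementary estimate). Your extra verification of the hypotheses of Theorem~\ref{thm1} and your evaluation $\sqrt{\Psi(D^2/\epsilon^2)}=t_k(\log D+|\log\epsilon|)$ are sound --- the latter even corrects a harmless factor-of-two slip in the argument of $t_k$ in the paper's displayed chain.
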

\begin{proof}
  By theorem~\ref{thm1} and lemma~\ref{lem1} one has for 
  $1/\epsilon \geq C+\|g\|_\psi$:
\begin{align*}
  \|f-f_\alpha \| &\leq \epsilon \sqrt{\Psi((C+ \|g\|_\psi)^2/\epsilon^2)} \\
  &\leq \epsilon t_k(2\log(C+\|g\|_\psi) - 2 \log(\epsilon)) \\
  &\leq \epsilon e 2^k(\log(C+\|g\|_\psi)-\log(\epsilon))^k \\
  &\leq \epsilon e 4^k(-\log(\epsilon))^k \\
  &\leq C\epsilon |\log(\epsilon)|^k.
\end{align*}
as $t_k(\lambda) \leq e \lambda^k$ for $\lambda \geq 1$ 
\end{proof}
A consequence of this lemma is that the ill-posedness of the problem is really
an issue for very high derivatives only. However, it is necessary to use
regularisation nonetheless as otherwise the data errors would remove any
advantage of the resolution enhancement and typically render the so
"enhanced" spectrum useless. Allen et al.~\cite{AllGG64} provide similar
correction formulas to the Eddington formula for Lorentz spectra and also
provide other correction formulas determining the coefficients in different
ways, see also~\cite{HegA05}. The analysis of the accuracy of so enhanced 
spectra can be analysed in exactly the same way as the Eddington formula.

In order to compare the above error bound for the Eddington correction formula
with the ones which we will obtain for other enhancement methods, one could
restate it as
$$
   \|f - f_\alpha\| \leq C \epsilon^{\eta(\epsilon)}
$$
where the exponent is
$$
   \eta(\epsilon) = 1 - k \frac{\log|\log(\epsilon)|}{|\log(\epsilon)|}.
$$
The formula is valid asymptotically and we assume that $0<\epsilon\leq 1/e$.
It can be seen that the smallest exponent is now obtained for $\epsilon=e^{-e}$
as
$$
  \eta_{\min}  = 1-k/e
$$ 
and consequently
$$
   \|f - f_\alpha\| \leq C \epsilon^{1-k/e}.
$$
It follows that for $k=1,2$ one gets an error bound which is similar to the
one obtained for an enhancement obtained through sharpening, see~\cite{HegA09}.
One can also get similar bounds for larger $k$ a necessary condition on the
error in this case, however, is
$$
     \frac{\log|\log(\epsilon)|}{|\log(\epsilon)|} < 1/k
$$
and while first and second order Eddington corrections (with second and fourth
derivatives) should work well even in the case of larger errors, but for higher
order derivative corrections one does require smaller data errors.

\subsection{Stokes enhancement with a Gaussian kernel\label{sec1.3}}

By using Fourier transforms, Stokes~\cite{Sto48} was able to introduce more
general spectral correction formulas which amount to general deconvolutions. An
example of such a formula would use a Gaussian kernel of the form
\begin{equation}
 \label{Genh}
 b(x,y) = 
  \frac{1}{\sqrt{2\pi}\kappa} \exp\left(-\frac{(x-y)^2}{2\kappa^2}\right).
\end{equation}
One can see that a resolution enhancement using this kernel reduces the width of
a unit Gaussian spectral line from equation~(\ref{Gauss-spec}) from one to
$\sqrt{1-\kappa^2}$. The enhanced spectrum is again a Gaussian with no other
local maxima and no local minima. While such an approach can be generalised to
other than Gaussian spectra (see~\cite{HegA09}) it does require the knowledge
of the spectrum. As Gaussian spectral lines are very smooth, using this type
of enhancement for less smooth non-Gaussian spectra will lead to meaningless
results as the range condition is not satisfied in such a case. 

For the Gaussian case, however, one has the following result about the error
of a regularised enhancement $f_\alpha$:
\begin{proposition}
Let $g$ be a Gaussian spectrum which has been enhanced by an operator with
kernel $b$ given in equation~(\ref{Genh}). Then the stable approximation 
$f_\alpha$ satisfies the error bound:
\begin{equation}
  \|f-f_\alpha \| \leq C \epsilon^{1-\kappa^2}.
\end{equation}
\end{proposition}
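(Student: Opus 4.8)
The plan is to follow the same route as Lemma~\ref{lem1} and the proposition on the Eddington correction formula, the simplification being that the Gaussian enhancement kernel~(\ref{Genh}) is itself a convolution, so that every operator in sight is diagonalised by the Fourier transform. First I would pass to the Fourier domain and record the multipliers. The kernel~(\ref{Genh}) is a Gaussian convolution, so $B$ acts as multiplication by $\hat b(\omega)=\exp(-\kappa^2\omega^2/2)$; since this multiplier is real and even, $B=B^*$, whence $BB^*$ is multiplication by $\exp(-\kappa^2\omega^2)$ and $(BB^*)^{-1}$ is multiplication by $\exp(\kappa^2\omega^2)$. The injectivity and monotonicity hypotheses of Theorem~\ref{thm1} are then immediate, since the spectrum of $BB^*$ is $(0,1]$ and the multiplier takes each value only on a set of measure zero.

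Next I would identify the function $\psi$ for which a Gaussian spectrum lies in $H_\psi$, in parallel with part~(1) of Lemma~\ref{lem1}. By Parseval the operator $\psi((BB^*)^{-1})$ is multiplication by $\psi(\exp(\kappa^2\omega^2))$, so for a Gaussian spectrum $g$ with $\hat g(\omega)=\exp(-\omega^2/2)\hat u(\omega)$ from~(\ref{Gauss-spec}) one has
$$
  \|g\|_\psi^2 = \int_\R \psi\!\left(\exp(\kappa^2\omega^2)\right)\exp(-\omega^2)\,|\hat u(\omega)|^2\,d\omega.
$$
Forcing the weight to equal $1$ requires $\psi(\exp(\kappa^2\omega^2))=\exp(\omega^2)$, and the substitution $\lambda=\exp(\kappa^2\omega^2)$ yields the power law
$$
  \psi(\lambda)=\lambda^{1/\kappa^2}.
$$
With this choice $\|g\|_\psi=\|u\|$, so every Gaussian spectrum lies in $H_\psi$ and the stability quantity $\|g\|_\psi$ appearing in Theorem~\ref{thm1} is finite.

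It then remains to produce a concave majorant $\Psi$ and insert everything into the interpolation inequality~(\ref{VHSineq}). Taking $\Psi=\psi^{-1}$ gives $\Psi(\eta)=\eta^{\kappa^2}$, which is increasing and satisfies $\Psi(\psi(\lambda))=\lambda$, and is concave because the enhancement requires $\kappa^2<1$ — this is exactly the condition under which the reduced width $\sqrt{1-\kappa^2}$ is real and under which $B^{-1}g$ stays in $L_2(\R)$, i.e. the range condition $g\in\range(B)$ holds for a Gaussian spectrum. Theorem~\ref{thm1} then gives
$$
  \|f-f_\alpha\| \le \epsilon\sqrt{\Psi\!\left((C+\|g\|_\psi)^2/\epsilon^2\right)}
  = \epsilon\left((C+\|g\|_\psi)^2/\epsilon^2\right)^{\kappa^2/2}
  = (C+\|g\|_\psi)^{\kappa^2}\,\epsilon^{1-\kappa^2},
$$
and absorbing the $\epsilon$-independent prefactor into a new constant yields the claimed bound.

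Unlike the Eddington case there is essentially no obstacle here: because $\psi$ turns out to be a pure power rather than an exponential, $\Psi$ is again a power, no logarithmic correction appears, and one lands squarely in the classical H\"older-type regime of the standard convergence theory. The only point demanding care is the concavity of $\Psi$, which is precisely what the enhancement constraint $\kappa<1$ guarantees. In the language of the introduction the constant $\kappa^2$ plays the role of a \emph{fixed} exponent $\eta$, so the more interesting $\epsilon$-dependent exponents only emerge once the kernel is no longer Gaussian.
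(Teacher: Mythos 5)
Your proposal is correct and follows essentially the same route as the paper's own proof: derive $\psi(\lambda)=\lambda^{1/\kappa^2}$ via Fourier transforms and Parseval, observe that $\Psi(\eta)=\eta^{\kappa^2}$ is concave precisely because $\kappa\in(0,1)$, and insert this into the interpolation bound of Theorem~\ref{thm1}. You simply make explicit the multiplier computations and the final algebra that the paper's two-line proof leaves implicit, and your closing remark that this is the classical H\"older-type regime matches the paper's own note that the source condition here is of classical form.
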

\begin{proof}
  Using Fourier transforms and the Parseval equality one derives 
  $\psi(\lambda) = \lambda^{1/\kappa^2}$. As $\kappa\in(0,1)$ the inverse
  $\Psi(\eta) = \psi^{-1}(\eta) = \eta^{\kappa^2}$ is concave and the bound then
  follows from theorem~\ref{thm1}.
\end{proof}
Note that in this case the source condition is a of a classical form and thus
the error bound may also be obtained using methods from~\cite{Gro84}.

As the spectral enhancement reduces the width by a factor $\sqrt{1-\kappa^2}$ it
follows for example that a reduction of the width by a factor two is obtained
by solving an integral equation of the first kind with error $O(\epsilon^{1/4})$
if a stable method is used and $\epsilon$ is the data error.

\section{Enhancing Voigt spectra with unknown line shape}

While it is known that many spectra are of Voigt type, i.e., they contain a
mixture of Gaussian and Lorentz broadening it is often unknown,
 how much of both types
are current in any particular spectrum. We will now present an enhancement
procedure which utilises a Lorentz kernel for the enhancement of a Voigt
spectrum. 

The enhancement equation $Bf=g$ providing the enhancement is an integral equation
with a Lorentz kernel of the form
$$
  b(x,y) = \frac{1}{\kappa \pi} \frac{1}{1+(x-y)^2/\kappa^2}.
$$
Thus $Bf$ is again a convolution and the Fourier transform is
$$
  \hat{b}(\omega) = \exp(-\kappa|\omega|).
$$
The width parameter $\kappa$ has to be chosen similar to the width parameter
for the Gaussian sharpening discussed in section~\ref{sec1.3} or the order of
the Eddington correction formula of section~\ref{sec1.2}. In this choice one
considers the trade-off between the enhancement obtained through the narrower
lines in the spectra and the error from the solution of the integral equation.

Before discussing the general case of a Voigt spectrum we provide a bound for
the error of the Stokes correction with Lorentz kernel of a Gaussian spectrum.
\begin{lemma}
  \label{lem2}
  Let $B$ be the enhancement operator for the Stokes correction formula with a 
  Lorentz kernel with width $\kappa$. Then a Gaussian spectrum is in the space
  $H_\psi$ (based on $B$) with
$
  \psi(\lambda) = \exp((\log(\lambda)/(2\kappa))^2).
$
Furthermore, the inverse $\Psi(\eta)= \psi^{-1}(\eta) =
\exp(2\kappa\sqrt{\log(\eta)})$ is concave if $\kappa\leq \sqrt{2}$ or if $\eta
\geq \frac{\kappa}{2}+\sqrt{(\kappa/2)^2-1/2}.$
\end{lemma}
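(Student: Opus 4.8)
The plan is to reuse the Fourier/Parseval computation that already drove Lemma~\ref{lem1}, and then to treat the concavity of $\Psi$ as a one-variable calculus problem after a substitution. Since $B$ is convolution by the Lorentz kernel its Fourier multiplier is $\hat b(\omega)=\exp(-\kappa|\omega|)$, which is real and positive, so $B$ is self-adjoint and $BB^*$ acts in the Fourier domain as multiplication by $\exp(-2\kappa|\omega|)$; hence $(BB^*)^{-1}$ is multiplication by $\exp(2\kappa|\omega|)$. First I would check that the proposed $\psi$ is chosen precisely so that $\psi((BB^*)^{-1})$ reduces to the multiplier $\exp(\omega^2)$: substituting $\lambda=\exp(2\kappa|\omega|)$ into $\psi(\lambda)=\exp((\log\lambda/(2\kappa))^2)$ gives $\exp(\omega^2)$. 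By Parseval this yields $\|g\|_\psi^2=\int_\R \exp(\omega^2)|\hat g(\omega)|^2\,d\omega$, and inserting $\hat g(\omega)=\exp(-\omega^2/2)\hat u(\omega)$ for a Gaussian spectrum cancels the weight down to $\int_\R|\hat u(\omega)|^2\,d\omega=\|u\|^2<\infty$. This is the same cancellation as in Lemma~\ref{lem1} and establishes that every Gaussian spectrum lies in $H_\psi$.

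For the inverse, solving $\eta=\exp((\log\lambda/(2\kappa))^2)$ for $\lambda\ge 1$ gives $\log\lambda=2\kappa\sqrt{\log\eta}$, that is, $\Psi(\eta)=\exp(2\kappa\sqrt{\log\eta})$ as claimed. The concavity is then the heart of the lemma. I would differentiate twice, writing $\ell=\log\eta$ to keep the bookkeeping manageable. A first differentiation gives $\Psi'(\eta)=\kappa\,\Psi(\eta)/(\eta\sqrt{\ell})$, and logarithmic differentiation of this expression yields
\begin{equation*}
  \Psi''(\eta)=\Psi'(\eta)\left(\frac{\kappa}{\eta\sqrt{\ell}}-\frac{1}{\eta}-\frac{1}{2\eta\ell}\right).
\end{equation*}
Since $\Psi'(\eta)>0$ for $\eta>1$, the sign of $\Psi''$ is that of the bracket, and multiplying through by the positive quantity $\eta\ell$ reduces concavity to the single inequality $\kappa\sqrt{\ell}-\ell-\tfrac12\le 0$.

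The decisive step is to read this as a quadratic. Setting $s=\sqrt{\ell}=\sqrt{\log\eta}\ge 0$, the condition becomes $s^2-\kappa s+\tfrac12\ge 0$, whose discriminant is $\kappa^2-2$. If $\kappa\le\sqrt{2}$ the discriminant is non-positive, the quadratic has no interior sign change, and $\Psi''\le 0$ for all admissible $\eta$; this is the first sufficient condition. If $\kappa>\sqrt{2}$ the quadratic is negative between its two roots and non-negative beyond the larger root $s_+=\kappa/2+\sqrt{(\kappa/2)^2-1/2}$, which is exactly the expression appearing in the statement, so concavity is recovered once $\eta$ (more precisely $s=\sqrt{\log\eta}$) is large enough. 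I expect the only real obstacle to be the clean execution of the second derivative and the careful tracking of which branch of the quadratic corresponds to the large-$\eta$ regime; the substitution $s=\sqrt{\log\eta}$ is what turns an otherwise opaque expression into a transparent sign condition, and I would double-check whether the threshold in the statement is meant for $\eta$ itself or for $\sqrt{\log\eta}$.
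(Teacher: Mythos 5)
Your proposal is correct and follows essentially the same route as the paper: the same Parseval computation identifying the multipliers $\exp(2\kappa|\omega|)$ for $(BB^*)^{-1}$ and $\exp(-\omega^2)$ for the Gaussian broadening, and the same sign analysis of $\Psi''$ — your bracket, after clearing the positive factor, is exactly the paper's condition $2\log\eta - 2\kappa\sqrt{\log\eta} + 1 \geq 0$, which the paper leaves as ``the conditions then follow directly'' while you carry out the quadratic-root analysis in $s=\sqrt{\log\eta}$ explicitly.

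Your closing doubt is well founded and worth recording: since the concavity condition is $s^2-\kappa s+\tfrac12\geq 0$ with $s=\sqrt{\log\eta}$, the large-root threshold $\kappa/2+\sqrt{(\kappa/2)^2-1/2}$ bounds $\sqrt{\log\eta}$, not $\eta$; the lemma as stated (``$\eta\geq\kappa/2+\sqrt{(\kappa/2)^2-1/2}$'') should read $\eta\geq\exp\bigl((\kappa/2+\sqrt{(\kappa/2)^2-1/2})^2\bigr)$, a slip that propagates into the hypothesis of the subsequent proposition but does not affect the asymptotic error bound, which only needs concavity for all sufficiently large $\eta$.
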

\begin{proof}
 We use the Fourier transforms of the kernel of $BB^*$ which is
$\exp(-2\kappa|\omega|)$ and $AA^*$ which is $\exp(-\omega^2)$ and the Parseval
equality to get $\psi$.

The second derivative of $\Psi$ is then
$$
  \frac{d^2\Psi}{d\eta^2} = -{\frac{\kappa\,\left(2\,\log \eta-2\,\sqrt{\log
\eta}\,\kappa+1\right)\, e^{2\,\sqrt{\log
\eta}\,\kappa}}{2\,\eta^2\,\left(\log \eta \right)^{{\frac{3}{2}}}}}
$$
and it follows that $\Psi$ is concave if $2\,\log
\eta-2\,\sqrt{\log\eta}\,\kappa+1\geq0$. The conditions then follow directly.
\end{proof}
It then remains to apply Theorem~\ref{thm1} to get the following error bound:
\begin{proposition}
  The error of a stably computed enhancement $f_\alpha$ of a Gaussian spectrum
  using the Stokes correction formula with a Lorentz kernel is bounded by
$$
  \|f_\alpha - f\| \leq \epsilon^{1-2\kappa/\sqrt{|\log\epsilon|}}
$$
for $0<\epsilon<\epsilon_0$ and some $\epsilon_0>0$.
\end{proposition}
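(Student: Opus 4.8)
The plan is to invoke Theorem~\ref{thm1} directly, using the functions $\psi$ and $\Psi$ supplied by Lemma~\ref{lem2}. The structure mirrors the proof of the Eddington proposition above: Theorem~\ref{thm1} gives
$$
  \|f-f_\alpha\| \leq \epsilon\,\sqrt{\Psi\bigl((C+\|g\|_\psi)^2/\epsilon^2\bigr)},
$$
so everything reduces to estimating $\sqrt{\Psi(D^2/\epsilon^2)}$ where I abbreviate $D = C+\|g\|_\psi$, a constant independent of $\epsilon$. I would first check that the hypotheses of Theorem~\ref{thm1} are met: injectivity and continuity of the Lorentz-kernel convolution operator $B$ are clear from $\hat b(\omega)=\exp(-\kappa|\omega|)$, and the concavity of $\Psi$ (needed for the interpolation inequality) is exactly what Lemma~\ref{lem2} guarantees for $\eta$ large enough — which is why the bound is only claimed for $\epsilon$ below some threshold $\epsilon_0$, ensuring the argument $D^2/\epsilon^2$ lands in the concave regime.

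Next I would substitute the explicit inverse $\Psi(\eta)=\exp\bigl(2\kappa\sqrt{\log\eta}\bigr)$ from Lemma~\ref{lem2}. Writing $\eta = D^2/\epsilon^2$, we have $\log\eta = 2\log D - 2\log\epsilon = 2\log D + 2|\log\epsilon|$ (taking $\epsilon<1$ so $\log\epsilon<0$). Then
$$
  \sqrt{\Psi(\eta)} = \exp\bigl(\kappa\sqrt{\log\eta}\bigr)
  = \exp\Bigl(\kappa\sqrt{2|\log\epsilon| + 2\log D}\Bigr).
$$
Combining with the prefactor $\epsilon = \exp(-|\log\epsilon|)$, the full bound becomes $\exp\bigl(-|\log\epsilon| + \kappa\sqrt{2|\log\epsilon|+2\log D}\bigr)$, and the task is to rewrite this in the form $\epsilon^{1-2\kappa/\sqrt{|\log\epsilon|}} = \exp\bigl(-|\log\epsilon| + 2\kappa\sqrt{|\log\epsilon|}\bigr)$.

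The main obstacle — really the only nontrivial analytic point — is controlling the constant $D$ inside the square root so that $\kappa\sqrt{2|\log\epsilon|+2\log D}$ is absorbed into $2\kappa\sqrt{|\log\epsilon|}$ as $\epsilon\to 0$. The factor $\sqrt{2}$ versus $2$ in the exponent means the stated bound is not a literal equality but an asymptotic one: since $\sqrt{2|\log\epsilon|+2\log D} = \sqrt{2|\log\epsilon|}\,(1+o(1))$ and $\sqrt 2 < 2$, for all sufficiently small $\epsilon$ one has $\kappa\sqrt{2|\log\epsilon|+2\log D} \leq 2\kappa\sqrt{|\log\epsilon|}$, which yields the claimed inequality for $0<\epsilon<\epsilon_0$ with $\epsilon_0$ chosen small enough to swallow both the constant $D$ and the concavity threshold from Lemma~\ref{lem2}. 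I would therefore close the argument by noting that the exponent $\eta(\epsilon)=2\kappa/\sqrt{|\log\epsilon|}$ tends to zero as $\epsilon\to0$, matching the $O(\epsilon^{1-\eta(\epsilon)})$ form advertised in the abstract.
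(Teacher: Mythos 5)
Your proposal is correct and takes essentially the same route as the paper: apply Theorem~\ref{thm1} with the $\Psi$ from Lemma~\ref{lem2}, then absorb the constant $D=C+\|g\|_\psi$ into the exponent for small $\epsilon$ (also handling the concavity threshold via $\epsilon_0$). Your asymptotic absorption step is in fact equivalent to the paper's explicit sufficient condition $1/\epsilon \geq C+\|g\|_\psi$, which gives $\log\bigl(D^2/\epsilon^2\bigr) \leq 4|\log\epsilon|$ and hence the bound directly.
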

\begin{proof}
  By theorem~\ref{thm1} and lemma~\ref{lem2} one has for $\epsilon>0$ and
  some $C$ which satisfy
$$
  \frac{1}{\epsilon} \geq C + \|g\|_\psi \geq \sqrt{\kappa/2+\sqrt{(\kappa/2)^2-1/2}}
$$
the bounds
\begin{align*}
  \|f-f_\alpha \| &\leq \epsilon \sqrt{\Psi((C+ \|g\|_\psi)^2/\epsilon^2)} \\
  &\leq \epsilon \exp(\kappa\sqrt{\log((C+\|g\|_\psi)^2/\epsilon^2)}) \\
  &\leq \epsilon \exp(2\kappa\sqrt{|\log\epsilon|} \\
  &\leq \epsilon^{1-2\kappa/\sqrt{|\log\epsilon|}}.
\end{align*}
\end{proof}
Note that here $\kappa$ is not the width of the enhanced spectrum but a 
parameter which controls how much enhancement is done. Thus a larger $\kappa$
corresponds to more enhancement and $\kappa=0$ to no enhancement. For example,
if one has $\epsilon \approx 10^{-3}$ and $\kappa = 0.7$ one gets an error
of approximately $O(\epsilon^{1/2})$.

As discussed in the introduction many spectra have undergone broadening both
with Gaussian and with Lorentz kernels. The resulting class of spectra are the
Voigt spectra. We assume here that we know that a given spectrum is in this 
class, however we do not assume that we know how much each of the two components
have contributed to the broadening. This is why we suggest a Stokes correction
with Lorentzian kernels.

Specifically, let the Lorentz kernel be
$$
  a_L(x,y) = \frac{1}{\sqrt{2}\pi}\, \frac{1}{1+\frac{(x-y)^2}{2}}
$$
with Fourier transform
$$
  \hat{a}_L(\omega) = \exp(-\sqrt{2}|\omega|).
$$
The Voigt spectrum (with mixing parameter $\theta$) is then defined by its 
Fourier transform
$$
  \hat{a}_V(\omega) = \hat{a}_G(\omega)^\theta \, \hat{a}_L(\omega)^{1-\theta},
$$
and the kernel is thus
$$
  a_V(x,y) = \frac{1}{2\pi} \int_\R e^{i\omega(x-y)} \hat{a}_V(\omega)\, d\omega.
$$
A Voigt spectrum is then of the form
$$
  g(x) = \int_\R a_V(x,y) u(y)\, dy
$$
for some $u\in L_2(\R)$ and $0<\theta \leq 1$. One then has
\begin{lemma}
  \label{lem3}
  Let $B$ be the enhancement operator for the Stokes correction formula with a 
  Lorentz kernel with width $\kappa$. Then a Voigt spectrum with parameter 
  $\theta$ is in the space
  $H_\psi$ (based on $B$) with
$
  \psi(\lambda) = \exp\left(\theta\left(\log(\lambda)/(2\kappa)\right)^2
        +\sqrt{8}(1-\theta)\log(\lambda)/(2\kappa)\right).
$
Furthermore, the inverse $\Psi=\psi^{-1}$ is defined by
$$
  \Psi(\eta) = \exp\left(\frac{2\kappa}{\theta}\left(
    \sqrt{2(1-\theta)^2+\theta\log(\eta)}-\sqrt{2}(1-\theta)\right)\right).
$$
and is concave if $\kappa \leq \sqrt{2}\,\theta^{3/2}$ or if $\eta\geq \eta_0$
for some $\eta_0>0$.
\end{lemma}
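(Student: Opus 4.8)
The plan is to reuse the three-step recipe behind Lemmas~\ref{lem1} and~\ref{lem2}: first identify $\psi$ by matching Fourier multipliers, then invert it by solving a quadratic, and finally read off concavity from the sign of $\Psi''$.

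First I would pass to the Fourier domain. Since $B$ is convolution with the Lorentz kernel of width $\kappa$, the operator $BB^*$ acts as multiplication by $|\hat{b}(\omega)|^2 = \exp(-2\kappa|\omega|)$, so $(BB^*)^{-1}$ multiplies by $\exp(2\kappa|\omega|)$ and, by the spectral calculus, $\psi((BB^*)^{-1})$ multiplies by $\psi(\exp(2\kappa|\omega|))$. Parseval's equality then gives
$$
  \|g\|_\psi^2 = \int_\R \psi(\exp(2\kappa|\omega|))\, |\hat{g}(\omega)|^2\, d\omega.
$$
For a Voigt spectrum one has $\hat{g} = \hat{a}_V\hat{u}$ with $\hat{a}_V(\omega) = \exp(-\theta\omega^2/2 - \sqrt{2}(1-\theta)|\omega|)$, whence $\|u\|^2 = \int_\R |\hat{g}|^2/|\hat{a}_V|^2\, d\omega$. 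Matching the two integrands forces $\psi(\exp(2\kappa|\omega|)) = 1/|\hat{a}_V(\omega)|^2 = \exp(\theta\omega^2 + \sqrt{8}(1-\theta)|\omega|)$; substituting $|\omega| = \log(\lambda)/(2\kappa)$ reproduces the stated $\psi$ and simultaneously shows $\|g\|_\psi^2 = \|u\|^2 < \infty$, so $g\in H_\psi$.

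Next I would invert $\psi$. With $x = \log(\lambda)/(2\kappa)$ and $\eta = \psi(\lambda)$ the defining relation is the quadratic $\theta x^2 + \sqrt{8}(1-\theta)x = \log\eta$; taking its positive root and setting $\log\lambda = 2\kappa x$ gives the claimed formula for $\Psi = \psi^{-1}$. A convenient by-product is the identity $\sqrt{2(1-\theta)^2 + \theta\log\eta} = \sqrt{2}(1-\theta) + \theta|\omega|$, so the radical occurring in $\Psi$ is exactly $w := \sqrt{2(1-\theta)^2 + \theta\log\eta}$, which ranges over $w\ge\sqrt{2}(1-\theta)$ as $\eta$ ranges over $\eta\ge1$.

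Finally, for concavity I would write $\Psi = e^h$ with $h(\eta) = \frac{2\kappa}{\theta}\bigl(w - \sqrt{2}(1-\theta)\bigr)$, so that $\Psi'' = (h'' + (h')^2)e^h$ and concavity is equivalent to $h'' + (h')^2 \le 0$. From $2ww' = \theta/\eta$ one gets $h' = \kappa/(w\eta)$, and collecting terms yields
$$
  h'' + (h')^2 = -\frac{\kappa\,(2w^2 - 2\kappa w + \theta)}{2w^3\eta^2},
$$
so $\Psi$ is concave exactly when $q(w) := 2w^2 - 2\kappa w + \theta \ge 0$. Its minimum over all $w$ is $\theta - \kappa^2/2$, which is non-negative once $\kappa \le \sqrt{2\theta}$; since $\theta\le1$ implies $\sqrt{2}\,\theta^{3/2} \le \sqrt{2\theta}$, the hypothesis $\kappa\le\sqrt{2}\,\theta^{3/2}$ already forces $q\ge0$ on the whole domain. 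For arbitrary $\kappa$ the quadratic is eventually positive because $w\to\infty$ with $\eta$; solving $q(w)\ge0$ for the larger root and translating through $w^2 = 2(1-\theta)^2 + \theta\log\eta$ produces the threshold $\eta_0$. I expect this last step to be the main obstacle: the differentiation producing $\Psi''$ is where the real work lies, and some care is needed because the genuine domain is $w\ge\sqrt{2}(1-\theta)$ rather than $w\ge0$, which is why the clean but non-sharp sufficient condition $\kappa\le\sqrt{2}\,\theta^{3/2}$ is preferable to pinning down the exact concavity threshold.
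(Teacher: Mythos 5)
Your proposal is correct and follows essentially the same route as the paper: Fourier multipliers and Parseval's equality to identify $\psi$ (with $\|g\|_\psi=\|u\|$), solving the quadratic in $\log\lambda/(2\kappa)$ to obtain $\Psi$, and a sign analysis of $\Psi''$ reducing concavity to the quadratic $q(w)=2w^2-2\kappa w+\theta\ge 0$ in $w=\sqrt{2(1-\theta)^2+\theta\log\eta}$. Indeed your computation is the more reliable version of the paper's: it specialises at $\theta=1$ exactly to the formula in Lemma~\ref{lem2}, and it yields concavity already under the sharper condition $\kappa\le\sqrt{2\theta}$, which exposes apparent typos in the paper's auxiliary constants --- in your variables the paper's quadratic is $q(w)=\theta\,(2\zeta^2-a\zeta+1)$ with $\zeta=w/\sqrt{\theta}$, so the correct values are $a=2\kappa\theta^{-1/2}$ and $b=2(1-\theta)^2/\theta$ rather than the stated $a=2\kappa\theta^{-3/2}$ and $b=2(1-\theta)^2\theta$, and the lemma's hypothesis $\kappa\le\sqrt{2}\,\theta^{3/2}$ is therefore sufficient (since $\theta\le 1$) but not sharp, exactly as you observe.
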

\begin{proof}
 We use the Fourier transforms of the kernel of $BB^*$ which is
$\exp(-2\kappa|\omega|)$ and $AA^*$ which is 
$\exp(-\theta\omega^2-\sqrt{8}(1-\theta)|\omega|)$ and the Parseval
equality to get $\psi$.

With $\zeta(\eta)=\sqrt{\log(\eta)+b}$, $a=2\kappa\theta^{-3/2}$ and
$b=2(1-\theta)^2\theta$ one then has for the second derivative of $\Psi$
$$
  \exp(\sqrt{2}(1-\theta))\frac{d^2\Psi}{d\eta^2} =
-\frac{ae^{a\zeta(\eta)}}{4\eta^2\zeta(\eta)^3}(2\zeta(\eta)^2-a\zeta(\eta)+1).
$$
One gets convexity for $\Psi$ if $2\zeta(\eta)^2-a\zeta(\eta)+1 \geq 0$ which
happens if $\kappa \leq \sqrt{2}\,\theta^{3/2}$, or for $\eta > \eta_0$ and
large enough $\eta_0$.
\end{proof}

Then an application of Theorem~\ref{thm1} provides again an error bound:
\begin{proposition}
  The error of a stably computed enhancement $f_\alpha$ of a Voigt spectrum
  with width parameter $\theta$ 
  using the Stokes correction formula with a Lorentz kernel is bounded by
$$
  \|f_\alpha - f\| \leq 
      \epsilon^{1-2\kappa/\sqrt{\theta|\log\epsilon|+(1-\theta)^2}}
$$
for $0<\epsilon<\epsilon_0$ and some $\epsilon_0>0$.
\end{proposition}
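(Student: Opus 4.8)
The plan is to follow the same two-step pattern as in the proofs of the two preceding propositions: first invoke Theorem~\ref{thm1} to reduce the error bound to an estimate for $\epsilon\sqrt{\Psi((C+\|g\|_\psi)^2/\epsilon^2)}$, and then extract the stated power of $\epsilon$ from the explicit $\Psi$ supplied by Lemma~\ref{lem3}. To apply Theorem~\ref{thm1} I would first verify its hypotheses: the membership $g\in H_\psi$ and the concavity of $\Psi$ are both provided by Lemma~\ref{lem3}. Since that lemma guarantees concavity only when $\kappa\leq\sqrt2\,\theta^{3/2}$ or for $\eta\geq\eta_0$, I would restrict to $0<\epsilon<\epsilon_0$ with $\epsilon_0$ so small that $(C+\|g\|_\psi)^2/\epsilon^2\geq\eta_0$; this is precisely the source of the threshold $\epsilon_0$ appearing in the statement. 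Theorem~\ref{thm1} then yields $\|f-f_\alpha\|\leq\epsilon\sqrt{\Psi(\eta)}$ with $\eta=(C+\|g\|_\psi)^2/\epsilon^2$.

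Next I would substitute the formula for $\Psi$ from Lemma~\ref{lem3} and take the square root, so that $\sqrt{\Psi(\eta)}=\exp(E)$ with $E=\frac{\kappa}{\theta}\big(\sqrt{2(1-\theta)^2+\theta\log\eta}-\sqrt2\,(1-\theta)\big)$. The key algebraic manoeuvre is to rationalise this difference of square roots, writing $E=\kappa\log\eta\big/\big(\sqrt{2(1-\theta)^2+\theta\log\eta}+\sqrt2\,(1-\theta)\big)$, from which one reads off that $E$ is monotonically increasing in $\log\eta$. Using $\log\eta=2\log(C+\|g\|_\psi)+2|\log\epsilon|$ and shrinking $\epsilon_0$ further if necessary so that $\log(C+\|g\|_\psi)\leq|\log\epsilon|$, I can invoke the crude bound $\log\eta\leq 4|\log\epsilon|$ and the monotonicity of $E$ to replace $\log\eta$ by $4|\log\epsilon|$ throughout, exactly as in the proof of the Gaussian case.

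It then remains to compare the resulting exponent with the target $2\kappa|\log\epsilon|\big/\sqrt{\theta|\log\epsilon|+(1-\theta)^2}$. Setting $L=|\log\epsilon|$, this reduces to the single scalar inequality $\sqrt2\,\sqrt{\theta L+(1-\theta)^2}\leq\sqrt{2\theta L+(1-\theta)^2}+(1-\theta)$, which I expect to be the main obstacle and the step that forces the precise form of the exponent. I would establish it by squaring both sides: the difference of the squared sides equals $2(1-\theta)\sqrt{2\theta L+(1-\theta)^2}$, which is non-negative precisely because $0<\theta\leq 1$. Finally, using $\exp(-\log\epsilon)=1/\epsilon$, I would rewrite $\epsilon\exp\big(2\kappa L\big/\sqrt{\theta L+(1-\theta)^2}\big)$ as $\epsilon^{1-2\kappa/\sqrt{\theta|\log\epsilon|+(1-\theta)^2}}$, giving the claimed bound. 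The only genuinely delicate points are the bookkeeping that absorbs the constant $C+\|g\|_\psi$ for small $\epsilon$ and the verification of the radical inequality; everything else is a direct transcription of the two earlier proofs.
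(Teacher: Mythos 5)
Your proposal is correct and follows essentially the same route as the paper: apply Theorem~\ref{thm1}, use the smallness condition $\epsilon(C+\|g\|_\psi)\leq 1$ (equivalently $\log(C+\|g\|_\psi)\leq|\log\epsilon|$) together with monotonicity to replace the argument of $\Psi$ by $\epsilon^{-4}$, rationalise the difference of square roots in the exponent, and bound the resulting $\eta(\epsilon)$ by $2\kappa/\sqrt{\theta|\log\epsilon|+(1-\theta)^2}$. Your only additions are details the paper leaves implicit --- the explicit handling of the concavity threshold $\eta_0$ via $\epsilon_0$, and the verification by squaring of the radical inequality that the paper asserts in its final estimate --- both of which check out.
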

\begin{proof}
  By theorem~\ref{thm1}, lemma~\ref{lem3} and the monotonicity of $\Psi$
  one has for $\epsilon>0$ satisfying $\epsilon (C+\|g\|_\psi) \leq 1$
  the bounds
\begin{align*}
  \|f-f_\alpha \| &\leq \epsilon \sqrt{\Psi((C+ \|g\|_\psi)^2/\epsilon^2)} \\
  \|f-f_\alpha\| & \leq \epsilon \sqrt{\Psi(\epsilon^{-4})} \\
  &\leq \epsilon 
    \exp\left(\frac{\kappa}{\theta}
    \left(\sqrt{2(1-\theta)^2+4\theta|\log\epsilon|} - \sqrt{2}(1-\theta)\right)
    \right) \\
  & = \epsilon^{1-\eta(\epsilon)}
\end{align*}
where
\begin{align*}
  \eta(\epsilon) &= \frac{\sqrt{2}\kappa}{\theta}\, \left(
   \frac{\sqrt{(1-\theta)^2+2\theta|\log(\epsilon)|}-(1-\theta)}{|\log\epsilon|}
   \right)\\
  & =  \sqrt{2}\kappa
     \frac{2}{\sqrt{(1-\theta)^2+2\theta|\log\epsilon|}+(1-\theta)} \\
  &\leq \frac{2\kappa}{\sqrt{\theta|\log\epsilon|+(1-\theta)^2}}
\end{align*}
As $0 < \epsilon < 1$ an upper bound for $\eta(\epsilon)$ will lead to an upper
bound for the error.
\end{proof}

\section{Enhancing a Gaussian peak}

We provide some simple experiments which show how resolution enhancement 
modifies 
a single Gaussian peak. In Figure~\ref{fig:1} a Lorentz correction formula is
applied with different values of the parameter $\kappa$. Comparing the widths
at height 0.5 one sees that for $\kappa$ ranging from $\sqrt{2}$ to $4$ one
gets reductions of the widths between a factor of $1/2$ to almost $1/5$. 
Note that resolution enhancement comes at a cost which grows with $\kappa$ in
the sense that side bands start to occur. From the plot it appears that the
peaks of the side bands are at the level of the original (unenhanced) spectrum
but can be negative. 
In Figure~\ref{fig:2} one sees how regularisation (using the source condition)
does further distort the peak. In this case we choose $\kappa=2$. One can
clearly see the oscillations and the broadening which are caused by
regularisation.
Finally, Figure~\ref{fig:3} considers the same regularisation methods (except
for the case of the regularisation parameter $\alpha=0$ which gives much larger
errors). Here a data error of 5\% has been included. The effect of the error
onto the enhanced signals is that mainly the oscillations away from the centre
are strongly affected by data error, especially for the case of a small 
regularisation parameter.
\begin{figure}[ht]
	\centering
	\includegraphics[width=\textwidth]{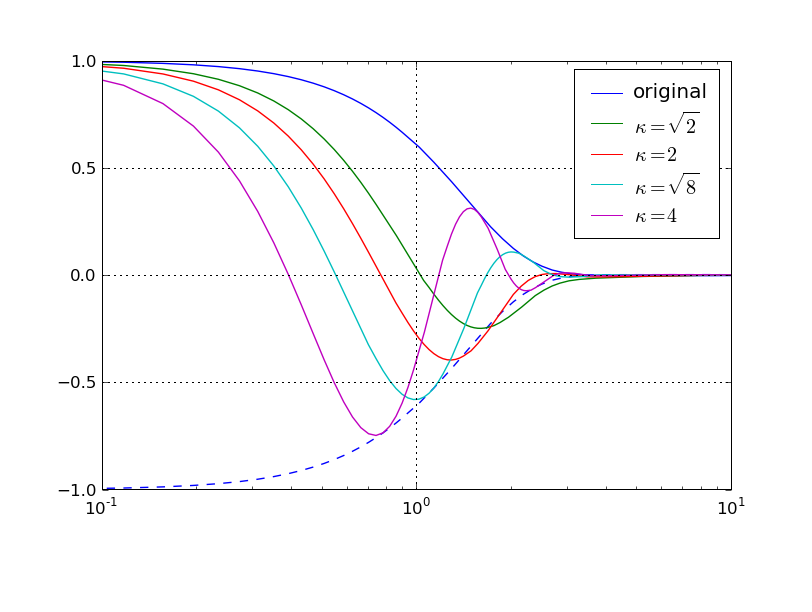}
	\caption{Lorentzian correction of a Gaussian}
	\label{fig:1}
\end{figure}
\begin{figure}[ht]
	\centering
	\includegraphics[width=\textwidth]{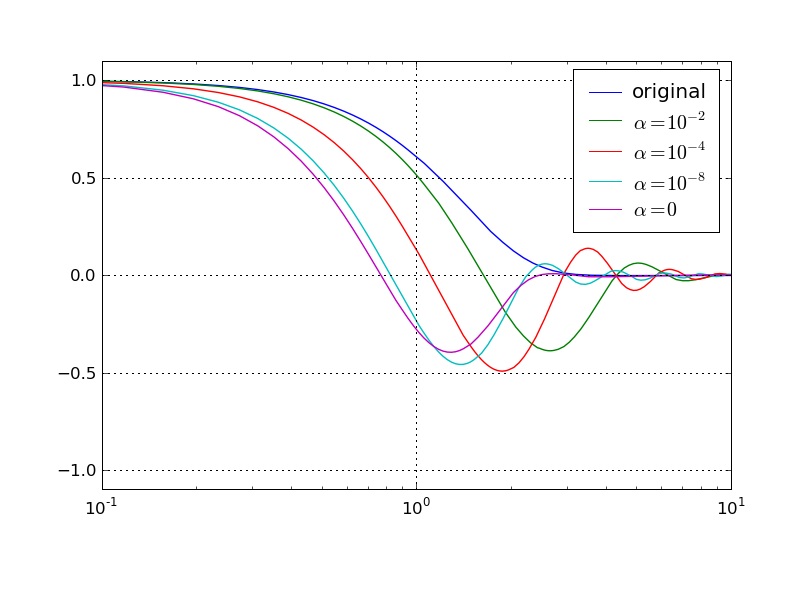}
	\caption{Regularised Lorentzian correction of a Gaussian}
	\label{fig:2}
\end{figure}
\begin{figure}[ht]
	\centering
	\includegraphics[width=1.00\textwidth]{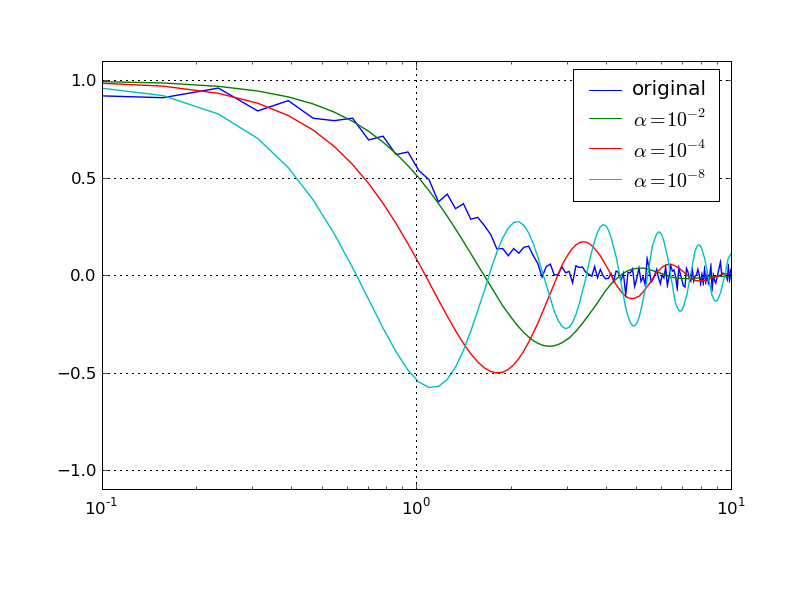}
	\caption{Regularised Lorentzian correction of a Gaussian with
           5\% data error}
	\label{fig:3}
\end{figure}

\section{Conclusion}

From the physics of spectral broadening one obtains the broadening equation $Au
= g$. For various reasons including the severe ill-posedness of the equations,
the fact that $A$ might not be known, and that $u$ might not be sufficiently
smooth, the solution of $Au=g$ is typically not feasible. However, this equation
provides a regularity or source condition for the solution of enhancement
equations $Bf = g$ which are essential for obtaining error bounds or
regularisation methods. As typically $A^*A$ is not a power of $B^*B$ the
standard convergence theory for ill-posed problems cannot be used. Instead we
apply the variable Hilbert scale theory and obtain convergence results for
Eddington correction and Lorentz deconvolution of Gaussian and Voigt spectra in
particular. Knowing these error bounds provides some insight into the choice of
the enhancement operators $B$ which goes beyond the range condition $\range(A)
\subset \range(B)$.

By a change of perspective one interprets resolution enhancement as an
application of an unbounded operator $R$. In the case of this paper, $R=B^{-1}$
for the integral operator $B$. Another larger class of such enhancements is
obtained when $R$ is a differential operator. The theory of the application of
such operators is covered in the recent book~\cite{Gro07} by Groetsch. A
specific algorithm for numerical differentiation based on averaging and
differences which converges with the size of the sampling with is analysed
in~\cite{AndHH98}. The important question of the choice of the amount of
differentiation for enhancement is discussed in~\cite{AndH09}.

If the broadening operator is known explicitly and is a convolution a different
approach to resolution enhancement is based on the dilation (or rather
contraction) of the spectral lines. Error bounds can also be obtained and a
variant of variable Hilbert scales, the \emph{dilational Hilbert scales} has
been introduced to perform this analysis in~\cite{HegA09}. The approach has a
particular appeal in practice as it does not introduce any satellite maxima. Such
maxima might still occur, however, when data errors are large and regularisation
has to be used.

There is a substantial practical literature on separating overlapping
line-shapes which cannot be covered here in any detail. As an example of a
method which uses extra information, i.e., the ratio of the heights of two lines
and the distance between them is the Rachinger correction formula~\cite{Rac48}.
This formula allows the determination of the corresponding line strengths $u_i$
even without knowledge of the shapes $a(\cdot,x_i)$. In a sense, this is also
what spectral enhancement methods attempt to achieve -- but without any extra
information. 

Related to the problem of spectral enhancement is the statistical problem of
deconvolution of a density. Convergence rates have been found for several such
problems in~\cite{CarH88}. These problems are often severely ill-posed and very
slow convergence rates are obtained. The reason for this is that one can only
assume that the underlying density is $k$ times differentiable. While the
authors did not use spectral theory nor the variable Hilbert scale interpolation
inequality for their results one can obtain similar results with these more
modern tools. This work has been continued and practical estimators are
discussed (also for less severely ill-posed problems) in~\cite{Fan91}. An
interesting adaptive approach to these statistical problems is discussed
in~\cite{Tsy00} where similar convergence results are obtained as in our
discussion but using different techniques for analysis and different algorithms,
see also~\cite{Mei08,JonM09}. It would certainly be of interest to investigate
these approaches from an ill-posed problem perspective using variable Hilbert
scales.

Maybe the most important limitation of the above discussion relates to the fact
that all the operators occurring are convolutions. As outlined in the discussion
of the models on broadening, the Doppler broadening is not a convolution and
one can see that the operator may be factorised into diagonal operators and 
a convolution. The next natural step would be to utilise norm
equivalences (possibly using wavelets) with the variable Hilbert scale 
interpolation theory to deal with such more general source conditions.

\section*{Acknowledgements}
This paper is dedicated to Chuck Groetsch who made an important contribution
to the original variable Hilbert scale paper~\cite{Heg92} by encouraging the
author at a crucial point in the process.

\bibliography{manu}
\bibliographystyle{amsplain}

\end{document}